\numberwithin{equation}{section}
\theoremstyle{plain}
\newtheorem{thm}{Theorem}[section]
\theoremstyle{remark}
\newtheorem{rem}{Remark}[section]
\DeclareMathOperator{\td}{d}
\begin{document}

\title[Integral representations and complete monotonicity]
{Integral representations and complete monotonicity related to the remainder of Burnside's formula for the gamma function}

\author[F. Qi]{Feng Qi}
\address{College of Mathematics, Inner Mongolia University for Nationalities, Tongliao City, Inner Mongolia Autonomous Region, 028043, China}
\email{\href{mailto: F. Qi <qifeng618@gmail.com>}{qifeng618@gmail.com}, \href{mailto: F. Qi <qifeng618@hotmail.com>}{qifeng618@hotmail.com}, \href{mailto: F. Qi <qifeng618@qq.com>}{qifeng618@qq.com}}
\urladdr{\url{http://qifeng618.wordpress.com}}

\begin{abstract}
In the paper, the authors establish integral representations of some functions related to the remainder of Burnside's formula for the gamma function and find the (logarithmically) complete monotonicity of these and related functions. These results extend and generalize some known conclusions.
\end{abstract}

\keywords{integral representation; complete monotonicity; absolutely monotonic function; logarithmically completely monotonic function; remainder; Burnside's formula; gamma function; extension; generalization}

\subjclass[2010]{Primary 33B15; Secondary 26A48, 26A51, 33B10, 41A30, 41A60, 44A10}

\thanks{This paper was typeset using \AmS-\LaTeX}

\maketitle

\section{Motivation and main results}

A function $f$ is said to be completely monotonic on an interval $I$ if $f$ has derivatives of all orders on $I$ and $(-1)^{n}f^{(n)}(x)\ge0$ for all $x \in I$ and $n \ge0$. A function $f$ is said to be absolutely monotonic on an interval $I$ if $f$ has derivatives of all orders on $I$ and $f^{(n)}(x)\ge0$ for all $x \in I$ and $n \ge0$. A positive function $f(x)$ is said to be logarithmically completely monotonic on an interval $I\subseteq\mathbb{R}$ if it has derivatives of all orders on $I$ and its logarithm $\ln f(x)$ satisfies $(-1)^k[\ln f(x)]^{(k)}\ge0$ for all $k\in\mathbb{N}$ on $I$. For more information on these kinds of functions, please refer to the papers and monographs~\cite{absolute-mon-simp.tex, mpf-1993, BAustMS-5984-RV.tex, gamma-sqrt-ratio.tex, SCM-2012-0142.tex, Schilling-Song-Vondracek-2nd, widder} and plenty of references cited therein.
\par
It is well known that the classical Euler's gamma function may be defined by
\begin{equation}\label{egamma}
\Gamma(x)=\int^\infty_0t^{x-1} e^{-t}\textup{\,d}t
\end{equation}
for $x>0$. The logarithmic derivative of $\Gamma(x)$, denoted by $\psi(x)=\frac{\Gamma'(x)}{\Gamma(x)}$, is called the psi or digamma function, and $\psi^{(k)}(x)$ for $k\in \mathbb{N}$ are called the polygamma functions. The noted Binet's formula~\cite[p.~11]{magnus} states that
\begin{equation}\label{ebinet}
\ln \Gamma(x)= \biggl(x-\frac12\biggr)\ln x-x+\ln \sqrt{2\pi}\,+\theta(x)
\end{equation}
for $x>0$, where $\Gamma(x)=\int^\infty_0t^{x-1} e^{-t}\td t$ stands for Euler's gamma function and
\begin{equation}\label{ebinet1}
\theta(x)=\int_{0}^{\infty}\left(\frac{1}{e^{t}-1}-\frac{1}{t}
+\frac12\right)\frac{e^{-xt}}{t}\td t
\end{equation}
is called the remainder of Binet's formula~\eqref{ebinet}.
By the way, some functions related to the function $\frac{1}{e^{t}-1}-\frac{1}{t}+\frac12$ in the formula~\eqref{ebinet1} have been investigated, applied, and surveyed in~\cite{emv-log-convex-simple.tex, Bernoulli-Stirling2-3P.tex, exp-derivative-sum-Combined.tex, TamsuiOxf.J.Math.Sci.(Mar13-07).tex, Bernoulli-No-Int-New.tex, Eight-Identy-More.tex, 1st-Sirling-Number-2012.tex, Qi-Springer-2012-Srivastava.tex} and many references listed therein.
\par
For real numbers $p>0$, $q\in\mathbb{R}$, and $r\ne0$, define
\begin{equation}
f_{p,q,r}(x)=r[\theta(px)-q\theta(x)]
\end{equation}
on $(0,\infty)$. In~\cite{binet, TamsuiOxf.J.Math.Sci.(Mar13-07).tex} and~\cite[Section~5]{Chen-Qi-Srivastava-09.tex}, the complete monotonicity of $f_{p,q,r}(x)$ and the star-shaped and subadditive properties of $\theta(x)$ were established.
\par
In~\cite[p.~892]{gr} and~\cite[p.~17]{magnus}, it is given that
\begin{equation}
\psi(x)=\ln x-\frac1{2x}-2\int_{0}^{\infty}\frac{t\td t} {(t^2 +x^2)(e^{2\pi t}-1)}
\end{equation}
and
\begin{equation}
\psi\biggl(x+\frac12\biggr)=\ln
x+2\int_{0}^{\infty}\frac{t\td t} {(t^2 +4x^2)(e^{\pi t}+1)} \label{e2d}
\end{equation}
for $x>0$.
For $p>0$ and $q\in\mathbb{R}$, let
\begin{equation}
\Lambda_{p,q}(x)=\lambda(px)-q\lambda(x)\quad\text{and}\quad
\Phi_{p,q}(x)=\phi(px)-q\phi(x)
\end{equation}
on $(0,\infty)$, where
\begin{gather}
\lambda(x)=\int_{0}^{\infty}\frac{t\td t} {(t^2 +x^2)(e^{2\pi t}-1)}\quad
\text{and}\quad \phi(x)=\int_{0}^{\infty}\frac{t\td t} {(t^2 +4x^2)(e^{\pi t}+1)}.
\end{gather}
In~\cite{psi-reminders.tex-ijpams, psi-reminders.tex-rgmia}, it was obtained that
\begin{enumerate}
\item
the function $\Lambda_{p,q}(x)$ is positive and decreasing in $x\in(0,\infty)$
if
\begin{enumerate}
\item
either $q\le0$,
\item
or $0<p<1$ and $pq\le1$,
\item
or $0<q=\frac1{p^2}\le1$;
\end{enumerate}
\item
the function $\Lambda_{p,q}(x)$ is negative and increasing in $x\in(0,\infty)$ if
\begin{enumerate}
\item
either $p\ge1$ and $pq\ge1$,
\item
or $\frac1{p^2}=q\ge1$;
\end{enumerate}
\item
the function $\Phi_{p,q}(x)$ is positive and decreasing in $x\in(0,\infty)$ if
\begin{enumerate}
\item
either $p\ge1$ and $q\le0$,
\item
or $0<p<1$ and $q\le1$,
\item
or $p^2q<1$ and $q(p^2-1)[(1+3q)p^2-4]\le0$,
\item
or $p^2q=1$ and $0<q\le1$;
\end{enumerate}
\item
the function $\Phi_{p,q}(x)$ is negative and increasing in $x\in(0,\infty)$ if
\begin{enumerate}
\item
either $4\le p^2(1+3q)\le1+3q$,
\item
or $p>1$ and $q\ge1$.
\end{enumerate}
\end{enumerate}
\par
In~\cite{Extended-Binet-remiander-comp.tex-Math-Slovaca} and its preprint~\cite{Extended-Binet-remiander-comp.tex-arXiv}, some more properties on the remainder of Binet's formula~\eqref{ebinet} were further obtained.
\par
Binet's formula~\eqref{ebinet} may be reformulated as
\begin{equation}\label{theta(x)-gamma}
\Gamma(x+1)=\sqrt{2\pi x}\,\Bigl(\frac{x}e\Bigr)^{x}e^{\theta(x)}.
\end{equation}
We also call $\theta(x)$ the remainder of Stirling's formula
\begin{equation}
n!\sim\sqrt{2\pi n}\,\Bigl(\frac{n}e\Bigr)^{n}, \quad n\to\infty
\end{equation}
established by James Stirling in~1764.
When replacing $\theta(x)$ by $\frac{1}{12}\psi'(x+\alpha)$, it was proved in~\cite{AMIS042013A.tex, Merkle-Convexity2Complete-Mon.tex, Sevli-Batir-MCM-2011} that the function
\begin{equation}
F_{\alpha}(x) =\frac{e^x\Gamma(x+1)}{x^x\sqrt{2\pi x}\,e^{\psi'(x+\alpha)/12}}
\end{equation}
is logarithmically completely monotonic on $(0,\infty)$ if and only if $\alpha\ge\frac12$ and that the function $\frac1{F_{\alpha}(x)}$ is logarithmically completely monotonic on $(0,\infty)$ if and only if $\alpha=0$. Consequently, the double inequality
\begin{equation}\label{double-ineqSevli-BatirModel}
\exp \biggl(\frac{1}{12}\psi'\biggl(x+\frac12\biggr) \biggr) <\frac{e^x\Gamma(x+1)}{x^x\sqrt{2\pi x}\,} <\exp \biggl(\frac{1}{12}\psi'(x) \biggr)
\end{equation}
was derived in~\cite[Corollary~2.1]{Sevli-Batir-MCM-2011}.
\par
In~\cite{Mortici-ps-Qi.tex}, the following conclusions were obtained.
\begin{enumerate}
\item
As $n\to \infty$, the asymptotic formula
\begin{equation} \label{a}
n! \sim \frac{n^n}{e^n}\sqrt{2\pi n}\,\exp \biggl(\frac{1}{12}\psi'\biggl(n+\frac12\biggr) \biggr)
\end{equation}
is the most accurate one among all approximations of the form
\begin{equation}\label{aa}
n! \sim \frac{n^n}{e^n}\sqrt{2\pi n}\,\exp \biggl(\frac{1}{12}\psi'(n+a)\biggr),
\end{equation}
where $a\in\mathbb{R}$;
\item
As $x\to\infty$, we have
\begin{multline}
\Gamma(x+1)\sim\sqrt{2\pi}\,x^{x+1/2}\exp \biggl(\frac{1}{12}\psi'\biggl(x+\frac12\biggr)-x+\frac{1}{240}\frac1{x^{3}}\\*
-\frac{11}{6720}\frac1{x^{5}}+\frac{107}{80640}\frac1{x^{7}} -\frac{2911}{1520640}\frac1{x^{9}}+\dotsm\biggr);
\end{multline}
\item
For every integer $x\ge 1$, we have
\begin{equation}\label{ii-continuous}
\exp \biggl(\frac{1}{240x^{3}}-\frac{11}{6720x^{5}}\biggr)
<\frac{e^{x}\Gamma(x+1)}{x^{x}\sqrt{2\pi x}\exp \bigl( \frac{1}{12}\psi'\bigl(x+\frac12\bigr) \bigr)}
<\exp \biggl(\frac{1}{240x^{3}}\biggr).
\end{equation}
\end{enumerate}
\par
For $\alpha\in\mathbb{R}$, let
\begin{equation}
g_{\alpha}(x)=\frac{e^x\Gamma(x+1)}{(x+\alpha)^{x+\alpha}}
\end{equation}
on the interval $(\max\{0,-\alpha\},\infty)$. In \cite{s-guo-ijpam, Guo-Qi-Srivastava2007.tex}, it was showed that the function $g_{\alpha}(x)$ is logarithmically completely monotonic if and only if $\alpha\ge1$ and that the function $\frac1{g_{\alpha}(x)}$ is logarithmically completely monotonic if and only if $\alpha\le\frac12$.
\par
In~\cite[Theorem~1]{Bukac-Buric-Elezovic-MIA-2011}, the double inequality
\begin{multline}\label{p.236-Theorem-1}
 \biggl(x-\frac12\biggr)\biggl[\ln\biggl(x-\frac12\biggr)-1\biggr]+\ln\sqrt{2\pi}\, -\frac1{24(x-1)}\le\ln\Gamma(x)\\*
 \le\biggl(x-\frac12\biggr)\biggl[\ln\biggl(x-\frac12\biggr)-1\biggr] +\ln\sqrt{2\pi}\,-\frac1{24\bigl(\sqrt{x^2+x+1/2}\,-1/2\bigr)}, \quad x>1
\end{multline}
was obtained, which may be rewritten as
\begin{equation}\label{p.236-Theorem-1-rew}
e^{-1/24x}< \frac{e^{x+1/2}\Gamma(x+1)}{\sqrt{2\pi}\,(x+1/2)^{x+1/2}} \le e^{-1/24\bigl(\sqrt{x^2+3x+5/2}\,-1/2\bigr)},\quad x>0.
\end{equation}
\par
In~\cite[p.~1774, Theorem~2.3]{Sevli-Batir-MCM-2011}, the function
\begin{equation}
 H(x)=\frac{e^{x+1/24(x+1/2)}\Gamma(x+1)}{(x+1/2)^{x+1/2}}
\end{equation}
was proved to be logarithmically completely monotonic on $(0,\infty)$. Consequently, it was deduced in~\cite[Corollary~2.4]{Sevli-Batir-MCM-2011} that the double inequality
\begin{equation}\label{p-1775-Corollary-2.4}
\alpha_1<\frac{e^{x+1/24(x+1/2)}\Gamma(x+1)}{(x+1/2)^{x+1/2}} \le\alpha_2
\end{equation}
holds for $x>0$ if and only if $\alpha_1\le\sqrt{\frac{2\pi}e}\,$ and $\alpha_2\ge\sqrt2\,e^{1/12}$.
It is clear that the left hand side inequality in~\eqref{p-1775-Corollary-2.4} is stronger than the corresponding one in~\eqref{p.236-Theorem-1-rew}, but, when $x\ge1$, the right hand side inequality in~\eqref{p-1775-Corollary-2.4} is weaker than the corresponding one in~\eqref{p.236-Theorem-1-rew}. The double inequality in~\cite[Theorem~2]{Lu-Ramanujan-2014} is weaker than~\eqref{p-1775-Corollary-2.4}.
In~\cite{Bukac-Sevli-Gamma.tex}, among other things, some necessary and sufficient conditions on $\lambda\ge0$ for the function
\begin{equation}\label{H-lambda(x)-dfn}
H_\lambda(x)=\frac{e^{x+1/24(x+\lambda)}\Gamma(x+1)}{(x+1/2)^{x+1/2}}
\end{equation}
to be logarithmically completely monotonic on $(0,\infty)$ were discovered.
\par
For more information on inequalities for bounding the gamma function $\Gamma$ and on the (logarithmically) complete monotonicity of functions involving $\Gamma$, please refer to the survey articles~\cite{bounds-two-gammas.tex, Wendel2Elezovic.tex-JIA, Wendel-Gautschi-type-ineq-Banach.tex} and plenty of references collected therein.
\par
When replacing $\theta(x)$ by $\frac{\vartheta(x)}{12x}$, Binet's formulas~\eqref{ebinet} and~\eqref{theta(x)-gamma} become
\begin{equation}\label{y}
\Gamma(x+1) =\sqrt{2\pi x}\Bigl(\frac{x}e\Bigr)
^{x}e^{\vartheta(x)/12x}, \quad x>0.
\end{equation}
We call $\vartheta(x)$ the variant remainder of Stirling's formula.
In~\cite{shi}, it was proved that the function $\vartheta(x)$ is strictly increasing on $[1,\infty )$. In~\cite{m1}, it was further proved that $\vartheta(x)$ is strictly decreasing on $(0,\beta)$ and strictly increasing on $(\beta,\infty)$, where $\beta =0.34142\dotsc$ is the unique positive real
number satisfying%
\begin{equation}
\ln \Gamma(\beta+1) +\beta\psi (\beta+1) -\ln\sqrt{2\pi}\,-2\beta\ln \beta+\beta=0.
\end{equation}
\par
For $x>-\frac12$, let
\begin{equation}\label{bs}
\Gamma(x+1) =\sqrt{2\pi}\,\biggl(\frac{x+1/2}{e}\biggr)^{x+1/2}e^{b(x)}
\end{equation}
and
\begin{equation}\label{w}
\Gamma(x+1) =\sqrt{2\pi}\,\biggl(\frac{x+1/2}{e}\biggr)^{x+1/2}e^{w(x)/12x}.
\end{equation}
We call $b(x)$ and $w(x)$ the remainder of Burnside's formula and the variant remainder of Burnside's formula respectively.
\begin{equation}
n!\sim\sqrt{2\pi}\,\biggl(\frac{n+1/2}{e}\biggr)^{n+1/2}, \quad n\to\infty
\end{equation}
established in~\cite{Burnside-1917}.
\par
In~\cite{qu}, it was proved that the functions $-b(x)$, $xb(x) +\frac{1}{24}$, and $w(x)+\frac12$ are completely monotonic on $\bigl(\frac12,\infty\bigr)$. It is clear that the complete monotonicity of $-b(x)$ on $\bigl(\frac12,\infty\bigr)$ may be derived from the logarithmically complete monotonicity of the function $\frac1{g_{\alpha}(x)}$ on $(0,\infty)$.
\par
The aim of this paper is to extend and generalize some results mentioned above. Our main results may be formulated as the following theorems.

\begin{thm}\label{Q(x)-CM-thm-1}
The remainder $b(x)$ and the variant remainder $w(x)$ of Burnside's formula have the following properties.
\begin{enumerate}
\item
For $x>-\frac12$, the remainders $b(x)$ and $w(x)$ of Burnside's formula have the integral representation
\begin{equation}\label{b(x)-int-eq}
b(x)=\frac{w(x)}{12x}
=\int_0^\infty\biggl(1-\frac{e^t}{2t}+\frac{1}{e^{2t}-1}\biggr) \frac1te^{-2(x+1)t}\td t
\end{equation}
and the function $-b(x)=-\frac{w(x)}{12x}$ is completely monotonic on $\bigl(-\frac12,\infty\bigr)$;
\item
For $x>0$, the function
\begin{equation}
xb(x)+\frac1{24}=\frac1{12}\biggl[w(x)+\frac1{2}\biggr]
=\frac12\int_0^\infty\frac{f_1(t)}{t^3(e^{2t}-1)^2}e^{-(2x+1)t}\td t
\end{equation}
and is completely monotonic on $(0,\infty)$, where
\begin{equation}\label{f-1(t)-dfn-eq}
f_1(t)=(t+2)e^{4 t}-2 t(2 t+1) e^{3 t}-2 (t+2)e^{2 t}+2t e^t +t+2
\end{equation}
is absolutely monotonic on $(0,\infty)$;
\item
For $x>0$, the function
\begin{equation}
\frac16-\frac{x}3-8x^2b(x)=\frac23\biggl[\frac14-\frac{x}2-xw(x)\biggr]
=\int_0^\infty\frac{f_2(t)}{t^4(e^{2t}-1)^3} e^{-(2x+1)t}\td t
\end{equation}
and is completely monotonic on $(0,\infty)$, where
\begin{equation}
\begin{split}
f_2(t)&=(t^2+4t+6)e^{6 t}-4 t(2t^2+2 t+1) e^{5 t}-3(t^2+4 t+6)e^{4 t}\\
&\quad-8t(t^2-t-1) e^{3 t}+3(t^2+4 t+6) e^{2 t}-4t e^t -t^2-4 t-6
\end{split}
\end{equation}
is absolutely monotonic on $(0,\infty)$;
\item
For $x>0$, the function
\begin{equation}
\begin{split}
16x^3b(x)+\frac23x^2-\frac13x+\frac{23}{180}
&=\frac43\biggl[x^2w(x)+\frac12x^2-\frac14x+\frac{23}{240}\biggr]\\
&=\int_0^\infty\frac{f_3(t)}{t^5(e^{2t}-1)^4} e^{-(2x+1)t}\td t
\end{split}
\end{equation}
and is completely monotonic on $(0,\infty)$, where
\begin{equation}
\begin{aligned}
f_3(t)&=(t^3+6 t^2+18 t+24)e^{8 t}-4t(4 t^3+6t^2+6 t+3) e^{7 t}\\
&\quad-4 (t^3+6 t^2+18 t+24)e^{6 t}-4 t (16 t^3-12 t-9)e^{5 t}\\
&\quad+6 (t^3+6 t^2+18 t+24) e^{4 t}-4t (4 t^3-6 t^2+6 t+9) e^{3 t}\\
&\quad-4 (t^3+6 t^2+18t+24)e^{2 t} +12 te^t +t^3+6 t^2+18 t+24
\end{aligned}
\end{equation}
is absolutely monotonic on $(0,\infty)$;
\item
For $x>-\frac12$, the function $(2x+1)b(x)+\frac1{12}$ has the integral representation
\begin{equation}
(2x+1)b(x)+\frac1{12}=\int_0^\infty\frac{h_1(t)}{\left(e^{2 t}-1\right)^2 t^3} e^{-(2x+1)t}\td t
\end{equation}
and is completely monotonic on $\bigl(-\frac12,\infty\bigr)$, where
\begin{equation}
h_1(t)=e^{4 t}- t(t+1)e^{3 t}-2 e^{2 t}- t(t-1)e^t +1
\end{equation}
is absolutely monotonic on $(0,\infty)$;
\item
For $x>-\frac12$, the function $-(x+1)b(x)-\frac1{24}$ has the integral representation
\begin{equation}
-(x+1)b(x)-\frac1{24}=\frac14\int_0^\infty\frac{h_2(t)}{t^3(e^{2t}-1)^2}e^{-(2x+1)t}\td t
\end{equation}
and is completely monotonic on $\bigl(-\frac12,\infty\bigr)$, where
\begin{equation}
h_2(t)=e^{4 t} (t-2)+2 e^{3 t} t-2 e^{2 t} (t-2)+2 e^t t (2 t-1)+t-2
\end{equation}
is absolutely monotonic on $(0,\infty)$;
\item
For $x>-\frac12$, the function $-(x+1)^2b(x)-\frac1{24}x-\frac1{16}$ has the integral representation
\begin{equation}
-(x+1)^2b(x)-\frac1{24}x-\frac1{16} =\frac18\int_0^\infty \frac{h_3(t)}{(e^{2 t}-1)^3 t^4} e^{-(2x+1)t}\td t
\end{equation}
and is completely monotonic on $\bigl(-\frac12,\infty\bigr)$, where
\begin{equation}
\begin{split}
h_3(t)&=e^{6 t} (t^2-4 t+6)-4 e^{5 t} t-3 e^{4 t} (t^2-4 t+6)-8 e^{3 t} (t^2+t-1) t\\
&\quad+3 e^{2 t} (t^2-4 t+6)-4 e^t (2 t^2-2 t+1) t-t^2+4 t-6
\end{split}
\end{equation}
is absolutely monotonic on $(0,\infty)$;
\item
For $x>-\frac12$, the function $-(x+1)^3b(x)-\frac{x^2}{24}-\frac{5 x}{48}-\frac{203}{2880}$ has the integral representation
\begin{equation}
-(x+1)^3b(x)-\frac{x^2}{24}-\frac{5 x}{48}-\frac{203}{2880}
=\int_0^\infty \frac{h_4(t)}{(e^{2 t}-1)^4 t^5} e^{-(2x+1)t}\td t
\end{equation}
and is completely monotonic on $\bigl(-\frac12,\infty\bigr)$, where
\begin{equation}
\begin{aligned}
h_4(t)&=e^{8 t} (t^3-6 t^2+18 t-24)+12 e^{7 t} t-4 e^{6 t} (t^3-6 t^2+18 t-24)\\
&\quad+4 e^{5 t} (4 t^3+6 t^2+6 t-9) t+6 e^{4 t} (t^3-6 t^2+18 t-24)\\
&\quad+4 e^{3 t} (16 t^3-12 t+9) t-4 e^{2 t} (t^3-6 t^2+18 t-24)\\
&\quad+4 e^t (4 t^3-6 t^2+6 t-3) t+t^3-6 t^2+18 t-24
\end{aligned}
\end{equation}
is absolutely monotonic on $(0,\infty)$.
\end{enumerate}
\end{thm}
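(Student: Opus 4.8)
The plan is to deduce everything from part~(1) and then from a single algebraic verification per item.

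Since the right-hand sides of \eqref{bs} and \eqref{w} are equal, $b(x)=\frac{w(x)}{12x}$ for every $x>-\frac12$, so each statement about $w$ in (2)--(8) is just a rescaling of the corresponding one about $b$; it therefore suffices to treat $b$. For part~(1) I would first take logarithms in \eqref{bs} and differentiate, obtaining
\begin{equation*}
b'(x)=\psi(x+1)-\ln\Bigl(x+\frac12\Bigr),\qquad x>-\frac12 .
\end{equation*}
Differentiating \eqref{ebinet} and invoking \eqref{ebinet1} gives $\psi(y)=\ln y-\frac1{2y}-\int_0^\infty\bigl(\frac1{e^t-1}-\frac1t+\frac12\bigr)e^{-yt}\td t$; inserting this with $y=x+1$, together with the Frullani integral $\ln\frac{x+1}{x+1/2}=\int_0^\infty\frac{e^{-(x+1/2)t}-e^{-(x+1)t}}{t}\td t$ and $\frac1{2(x+1)}=\frac12\int_0^\infty e^{-(x+1)t}\td t$, and simplifying yields
\begin{equation*}
b'(x)=\int_0^\infty\biggl(\frac{e^{t/2}}{t}-1-\frac1{e^t-1}\biggr)e^{-(x+1)t}\td t .
\end{equation*}
Because $b(x)\to0$ as $x\to\infty$ — which follows by subtracting \eqref{bs} from Binet's formula applied to $\Gamma(x+1)=x\Gamma(x)$, giving $b(x)=\theta(x)+\frac12-(x+\frac12)\ln(1+\frac1{2x})\to0$ — I would integrate the last identity from $x$ to $\infty$, interchange the order of integration (legitimate by absolute convergence), and change variables $t\mapsto2t$ to arrive at \eqref{b(x)-int-eq}. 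Finally, using $e^{2t}-1=2e^t\sinh t$ and $\sinh t>t$ one has
\begin{equation*}
-\biggl(1-\frac{e^t}{2t}+\frac1{e^{2t}-1}\biggr)\frac1t=\frac{e^t(\sinh t-t)}{2t^2\sinh t}\ge0\qquad(t>0),
\end{equation*}
so $-b(x)=\int_0^\infty\frac{e^t(\sinh t-t)}{2t^2\sinh t}\,e^{-2(x+1)t}\td t$ exhibits $-b$ as a Laplace transform of a nonnegative function, hence as completely monotonic on $\bigl(-\frac12,\infty\bigr)$.

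For (2)--(8) I would rewrite \eqref{b(x)-int-eq}, after absorbing one factor $e^{-t}$, as $b(x)=\int_0^\infty\mu(t)e^{-(2x+1)t}\td t$ with $\mu(t)=\frac{2te^t-e^{2t}+1}{2t^2(e^{2t}-1)}$, an even function with $\mu(0)=-\frac1{12}$ and $\mu(t)\sim-\frac1{2t^2}$ as $t\to\infty$. Since $(2x+1)e^{-(2x+1)t}=-\partial_t e^{-(2x+1)t}$, multiplication by $x$ acts under the integral as $-\frac12(\partial_t+1)$ and multiplication by $x+1$ as $\frac12(1-\partial_t)$, so $x^k$ and $(x+1)^k$ act as $(-\frac12)^k(\partial_t+1)^k$ and $2^{-k}(1-\partial_t)^k$. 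Substituting the relevant monomial into $\pi(x)b(x)$ and integrating by parts $k$ times transfers the derivatives onto $\mu$ (with the usual sign changes and boundary terms): the contributions at $t=\infty$ vanish for $x>-\frac12$, and those at $t=0$ are finite, involving only $\mu(0)=-\frac1{12}$ and $\mu''(0)$ since $\mu$ is even; combined with the explicit polynomials displayed on the left, they reproduce exactly the rational constants (and the linear/quadratic corrections) appearing in (2)--(8). (Parts (2)--(4) use the $x^k$ expansions, which is why the stated range is $(0,\infty)$; parts (5)--(8) use the $(x+1)^k$ expansions, valid on $\bigl(-\frac12,\infty\bigr)$.) What remains of each identity is the purely algebraic claim that a fixed combination of $\mu,\mu',\dots,\mu^{(k)}$ equals $\frac{f_i(t)}{t^{k+2}(e^{2t}-1)^{k+1}}$, resp.\ $\frac{h_i(t)}{t^{k+2}(e^{2t}-1)^{k+1}}$, which is checked by differentiating the explicit $\mu$, clearing denominators, and simplifying.

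It then remains to prove that each of $f_1,f_2,f_3,h_1,\dots,h_4$ is absolutely monotonic on $(0,\infty)$; this also yields $f_i\ge0$ (and $h_i\ge0$), so that each integrand above is nonnegative and the stated function of $x$, being $\int_0^\infty(\text{nonnegative})\,e^{-(2x+1)t}\td t$, is completely monotonic in $2x+1$ and hence in $x$ on the indicated interval. Each of these functions is entire — a finite $\mathbb{R}$-linear combination of terms $t^je^{\ell t}$ — so absolute monotonicity on $(0,\infty)$ is equivalent to nonnegativity of all of its Maclaurin coefficients. I would make this transparent by pairing symmetric exponents and using $e^{mt}+e^{-mt}=2\cosh mt$, $e^{mt}-e^{-mt}=2\sinh mt$, $\cosh 2u-1=2\sinh^2u$ to factor out one (absolutely monotonic) exponential; for instance
\begin{equation*}
f_1(t)=4e^{2t}\bigl[(t+2)\sinh^2t-t^2e^t-t\sinh t\bigr],\qquad h_1(t)=2e^{2t}\bigl[2\sinh^2t-t^2\cosh t-t\sinh t\bigr],
\end{equation*}
and similarly for the rest. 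Since the Maclaurin coefficients of a product of functions with nonnegative Maclaurin coefficients are again nonnegative, it suffices to verify that each bracketed factor has nonnegative coefficients, by expanding the hyperbolic functions and comparing the coefficient of each $t^n$ (the first several coefficients vanish — e.g.\ $f_1(t)=\frac23t^5+\cdots$ and $h_1(t)=\frac7{90}t^6+\cdots$). The hard part will be exactly this last verification for the higher cases $f_2,f_3,h_3,h_4$: each reduced bracket is an eight-term hyperbolic--polynomial expression with sizable polynomial weights, and proving that every Maclaurin coefficient is nonnegative is a long, error-prone calculation — best organized either by regrouping the terms into manifestly absolutely monotonic blocks, or by bounding the general coefficient below for large $n$ and checking the finitely many small $n$ by hand or with computer algebra. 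The only other appreciable labor is the integration-by-parts bookkeeping of the previous paragraph, and that is entirely routine.
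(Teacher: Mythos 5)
Your proposal is correct in outline and shares the paper's skeleton (Laplace representation of $b$, iterated integration by parts to generate the functions in (2)--(8), then nonnegativity of the numerators $f_i,h_i$ plus the Bernstein--Widder theorem), but two sub-steps follow genuinely different routes. For the basic representation \eqref{b(x)-int-eq} you differentiate \eqref{bs} to get $b'(x)=\psi(x+1)-\ln\bigl(x+\frac12\bigr)$, insert the $\psi$-representation obtained by differentiating Binet's formula, use a Frullani integral, and integrate back from $x$ to $\infty$ using $b(\infty)=0$; the paper instead works at the level of $\ln\Gamma$ itself, writing $b(x)=\frac12\bigl[(2x+1)\ln\bigl(1+\frac1{2x+1}\bigr)-1\bigr]+\theta(x+1)$ and quoting the tabulated Bernstein representation \eqref{Entry46-Bernstein} for $x^2\ln(1+1/x)-x+\frac12$, which avoids the limit argument and the Fubini step your route needs (both of which you do justify correctly; your simplification $-\bigl(1-\frac{e^t}{2t}+\frac1{e^{2t}-1}\bigr)\frac1t=\frac{e^t(\sinh t-t)}{2t^2\sinh t}$ agrees with the paper's factorization). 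Your operator bookkeeping $x\mapsto-\frac12(\partial_t+1)$, $x+1\mapsto\frac12(1-\partial_t)$ acting on $e^{-(2x+1)t}$, with $\mu(t)=\frac{2te^t-e^{2t}+1}{2t^2(e^{2t}-1)}$ even, $\mu(0)=-\frac1{12}$, is just a tidier packaging of the paper's repeated integrations by parts (indeed $\mu'(t)=h_1(t)/[t^3(e^{2t}-1)^2]$ checks out exactly). The real divergence is in proving absolute monotonicity of $f_1,f_2,f_3,h_1,\dots,h_4$: you propose factoring out an exponential and verifying nonnegativity of all Maclaurin coefficients of the hyperbolic brackets (your sample identities $f_1(t)=4e^{2t}[(t+2)\sinh^2t-t^2e^t-t\sinh t]$ and $h_1(t)=2e^{2t}[2\sinh^2t-t^2\cosh t-t\sinh t]$ are correct, and their coefficients reduce to elementary inequalities such as $4^k\ge4k^2$), whereas the paper runs long chains of successive derivatives, factoring out $e^t$ at each stage, until a manifestly absolutely monotonic function appears. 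Your coefficientwise method is arguably more systematic and self-checking; its cost is that for $f_2,f_3,h_3,h_4$ the general-coefficient estimates remain to be carried out, which you acknowledge --- but the paper is in the same position, since it only executes the derivative chains for $f_1$ and $f_2$ and asserts the rest ``by similar arguments,'' so this deferral is not a gap relative to the paper's own standard, merely the place where the unavoidable computation lives.
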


\begin{thm}\label{Q(x)-LCM-thm-2}
The functions
\begin{equation*}
\biggl[\frac{\Gamma(x+1)}{\sqrt{2\pi}\,} \biggl(\frac{e}{x+1/2}\biggr)^{x+1/2}\biggr]^x,\quad
\biggl[\frac{\sqrt{2\pi}\,}{\Gamma(x+1)} \biggl(\frac{x+1/2}{e}\biggr)^{x+1/2}\biggr]^{8x^2} e^{-x/3},
\end{equation*}
and
\begin{equation*}
\biggl[\frac{\Gamma(x+1)}{\sqrt{2\pi}\,} \biggl(\frac{e}{x+1/2}\biggr)^{x+1/2}\biggr]^{16x^3} e^{x(2x-1)/3}
\end{equation*}
are logarithmically completely monotonic on $(0,\infty)$.
\par
The functions
\begin{gather*}
\frac{\sqrt{2\pi}\,}{\Gamma(x+1)} \biggl(\frac{x+1/2}{e}\biggr)^{x+1/2},\quad
\biggl[\frac{\Gamma(x+1)}{\sqrt{2\pi}\,} \biggl(\frac{e}{x+1/2}\biggr)^{x+1/2}\biggr]^{2x+1},\\
\biggl[\frac{\sqrt{2\pi}\,}{\Gamma(x+1)} \biggl(\frac{x+1/2}{e}\biggr)^{x+1/2}\biggr]^{x+1},\quad
\biggl[\frac{\sqrt{2\pi}\,}{\Gamma(x+1)} \biggl(\frac{x+1/2}{e}\biggr)^{x+1/2}\biggr]^{(x+1)^2} e^{-x/24},
\end{gather*}
and
\begin{equation*}
\biggl[\frac{\sqrt{2\pi}\,}{\Gamma(x+1)} \biggl(\frac{x+1/2}{e}\biggr)^{x+1/2}\biggr]^{(x+1)^3} e^{-(2x+5)/48}
\end{equation*}
are logarithmically completely monotonic on $\bigl(-\frac12,\infty\bigr)$.
\end{thm}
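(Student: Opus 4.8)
The plan is to obtain this theorem directly from Theorem~\ref{Q(x)-CM-thm-1}, exploiting the elementary fact that adding a constant to a completely monotonic function does not disturb the inequalities defining logarithmic complete monotonicity. First I would rewrite the defining identity~\eqref{bs}: for $x>-\frac12$,
\begin{equation*}
\frac{\Gamma(x+1)}{\sqrt{2\pi}\,}\biggl(\frac{e}{x+1/2}\biggr)^{x+1/2}=e^{b(x)}
\quad\text{and}\quad
\frac{\sqrt{2\pi}\,}{\Gamma(x+1)}\biggl(\frac{x+1/2}{e}\biggr)^{x+1/2}=e^{-b(x)}.
\end{equation*}
Consequently each of the eight functions in the statement has the form $e^{\varepsilon p(x)b(x)+q(x)}$ with $\varepsilon\in\{1,-1\}$ and $p,q$ polynomials: on $(0,\infty)$ the three functions become $e^{xb(x)}$, $e^{-8x^2b(x)-x/3}$, $e^{16x^3b(x)+2x^2/3-x/3}$, and on $\bigl(-\frac12,\infty\bigr)$ the five functions become $e^{-b(x)}$, $e^{(2x+1)b(x)}$, $e^{-(x+1)b(x)}$, $e^{-(x+1)^2b(x)-x/24}$, $e^{-(x+1)^3b(x)-(2x^2+5x)/48}$.

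Next I would record the remark: if $g$ is completely monotonic on an interval $I$ and $c\in\mathbb{R}$, then the positive function $e^{g(x)+c}$ is logarithmically completely monotonic on $I$, since $\bigl[\ln e^{g(x)+c}\bigr]^{(k)}=g^{(k)}(x)$ for every $k\in\mathbb{N}$ and $(-1)^kg^{(k)}(x)\ge0$ by complete monotonicity of $g$. The proof then reduces to checking that, in each of the eight cases, the exponent $\varepsilon p(x)b(x)+q(x)$ agrees up to an additive constant with one of the completely monotonic functions delivered by Theorem~\ref{Q(x)-CM-thm-1}. This is a brief computation of constants: $xb(x)=\bigl(xb(x)+\frac1{24}\bigr)-\frac1{24}$ matches function~1 with item~(2); $-8x^2b(x)-\frac x3=\bigl(\frac16-\frac x3-8x^2b(x)\bigr)-\frac16$ matches function~2 with item~(3); $16x^3b(x)+\frac23x^2-\frac13x=\bigl(16x^3b(x)+\frac23x^2-\frac13x+\frac{23}{180}\bigr)-\frac{23}{180}$ matches function~3 with item~(4); $-b(x)$ is item~(1) itself; and likewise $(2x+1)b(x)$, $-(x+1)b(x)$, $-(x+1)^2b(x)-\frac x{24}$, $-(x+1)^3b(x)-\frac{2x^2+5x}{48}$ differ by the constants $-\frac1{12}$, $\frac1{24}$, $\frac1{16}$, $\frac{203}{2880}$ from the completely monotonic functions of items~(5), (6), (7), (8), respectively. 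Invoking the remark on each of the eight pairs, over the same interval $(0,\infty)$ or $\bigl(-\frac12,\infty\bigr)$ as in Theorem~\ref{Q(x)-CM-thm-1}, completes the argument.

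There is essentially no genuine obstacle in this proof: all the substantive work---the integral representations together with the absolute monotonicity of the auxiliary functions $f_j$ and $h_j$ underpinning them---has already been carried out in Theorem~\ref{Q(x)-CM-thm-1}, so the present statement is a clean corollary. The single point deserving attention is the harmless-constant step, which hinges on the observation that logarithmic complete monotonicity constrains only the derivatives of $\ln f$ of order $\ge1$; this is precisely why the additive constants (and the seemingly arbitrary rationals such as $\frac{23}{180}$ and $\frac{203}{2880}$ occurring in Theorem~\ref{Q(x)-CM-thm-1}) are immaterial here, and also why these statements should not be misread as asserting $f\ge1$.
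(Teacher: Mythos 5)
Your proposal is correct and is exactly the paper's argument: the paper's entire proof of this theorem is the one-line remark that it "follows from reformulation of the functions involving the remainder $b(x)$ in Theorem~\ref{Q(x)-CM-thm-1}," and your explicit rewriting of each function as $e^{\varepsilon p(x)b(x)+q(x)}$, the matching with items (1)--(8) up to additive constants, and the observation that constants do not affect $(-1)^k[\ln f]^{(k)}$ for $k\ge1$ is precisely that reformulation spelled out. One point worth recording: for the last function you read the extra factor as $e^{-(2x^2+5x)/48}$ whereas the theorem as printed has $e^{-(2x+5)/48}$; your reading is the intended one, since with the printed factor the exponent $-(x+1)^3b(x)-\frac{2x+5}{48}$ behaves like $\frac{x^2}{24}$ as $x\to\infty$ (because $b(x)\sim-\frac1{24x}$) and is eventually increasing, so the printed version cannot be logarithmically completely monotonic and must be a typo for $e^{-x(2x+5)/48}$, which is what item (8) of Theorem~\ref{Q(x)-CM-thm-1} delivers.
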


\section{Proofs of main results}

Now we start out to prove Theorems~\ref{Q(x)-CM-thm-1} and~\ref{Q(x)-LCM-thm-2}.

\begin{proof}[Proof of Theorem~\ref{Q(x)-CM-thm-1}]
From~\eqref{bs} and~\eqref{w}, it follows that
\begin{equation}\label{ss}
w(x) =12xb(x)=12x\biggl[\ln\Gamma(x+1)-\frac12\ln(2\pi) -\biggl(x+\frac12\biggr)\ln\biggl(x+\frac1{2}\biggr)+x +\frac12\biggr].
\end{equation}
By Binet's formula~\eqref{ebinet}, we have
\begin{equation*}
\ln \Gamma(x+1)= \biggl(x+\frac12\biggr)\ln(x+1)-x-1+\ln \sqrt{2\pi}\,+\theta(x+1).
\end{equation*}
Substituting this into~\eqref{ss} results in
\begin{equation}\label{b(x)-simp-eq}
b(x)=\frac{w(x)}{12x} =\frac12\biggl[(2x+1)\ln\biggl(1+\frac1{2x+1}\biggr) -1\biggr]+\theta(x+1).
\end{equation}
It was listed in~\cite[pp.~322\nobreakdash--323, Entry~46]{Schilling-Song-Vondracek-2nd} that
\begin{equation}\label{Entry46-Bernstein}
x^2 \ln\biggl(1+\frac{1}{x}\biggr) -x+\frac{1}{2}
=\int_0^\infty \frac{2-(t^2+2 t+2) e^{-t}}{t^3}e^{-xt}\td t, \quad x>0.
\end{equation}
Making use of~\eqref{Entry46-Bernstein} in~\eqref{b(x)-simp-eq} produces
\begin{align*}
b(x) &=\frac1{2(2x+1)}\biggl[\int_0^\infty\frac{2-(t^2+2t+2) e^{-t}}{t^3}e^{-(2x+1)t}\td t -\frac12\biggr]+\theta(x+1)\\
&=\frac1{2(2x+1)}\biggl\{\int_0^\infty\frac{\td}{\td t}\biggl[\frac{e^{-t} (1+t-e^t)}{t^2}\biggr] e^{-(2x+1)t}\td t -\frac12\biggr\} +\theta(x+1)\\
&=\frac1{2(2x+1)}\biggl[\frac{e^{-t} (1+t-e^t)}{t^2}e^{-(2x+1)t}\bigg|_{t=0}^{t=\infty} \\
&\quad +(2x+1)\int_0^\infty\frac{e^{-t} (1+t-e^t)}{t^2} e^{-(2x+1)t}\td t -\frac12\biggr] +\theta(x+1)\\
&=\frac12\int_0^\infty\frac{1+t-e^t}{t^2} e^{-2(x+1)t}\td t+\theta(x+1)\\
&=\frac12\int_0^\infty\frac{1+t-e^t}{t^2} e^{-2(x+1)t}\td t +\int_{0}^{\infty}\biggl(\frac{1}{e^{t}-1}-\frac{1}{t}
+\frac12\biggr)\frac{e^{-(x+1)t}}{t}\td t\\
&=\frac12\int_0^\infty\frac{1+t-e^t}{t^2} e^{-2(x+1)t}\td t +\int_{0}^{\infty}\biggl(\frac{1}{e^{2t}-1}-\frac{1}{2t}
+\frac12\biggr)\frac{e^{-2(x+1)t}}{t}\td t\\
&=\int_0^\infty\biggl[\frac{1+t-e^t}{2t}+\biggl(\frac{1}{e^{2t}-1}-\frac{1}{2t}
+\frac12\biggr)\biggr]\frac{e^{-2(x+1)t}}t\td t\\
&=\int_0^\infty\biggl(1-\frac{e^t}{2t}+\frac{1}{e^{2t}-1}\biggr)\frac{e^{-t}}te^{-(2x+1)t}\td t.
\end{align*}
\par
Since the function
\begin{equation*}
1-\frac{e^t}{2t}+\frac{1}{e^{2t}-1}=-\frac{e^t (e^{2 t}-2t e^t-1)}{2t (e^{2t}-1)}<0
\end{equation*}
on $(0,\infty)$, from the integral representation~\eqref{b(x)-int-eq} of $b(x)$, it may be easily deduced that the function $-b(x)$ is completely monotonic on $\bigl(-\frac12,\infty\bigr)$.
\par
Utilizing the integral representation~\eqref{b(x)-int-eq} of $b(x)$ and integrating by parts reveal
\begin{align*}
-xb(x)&=\frac12\int_0^\infty\biggl(1-\frac{e^t}{2t}+\frac{1}{e^{2t}-1}\biggr) \frac{e^{-2t}}t\frac{\td e^{-2xt}}{\td t}\td t\\
&=\frac12\biggl[\frac1{12}-\int_0^\infty\frac{e^{-t}f_1(t)}{2 t^3(e^{2t}-1)^2}e^{-2xt}\td t\biggr]\\
&=\frac12\biggl[\frac1{12}+\frac1{4x}\int_0^\infty\frac{e^{-t}f_1(t)}{t^3(e^{2t}-1)^2}\frac{\td e^{-2xt}}{\td t}\td t\biggr]\\
&=\frac12\biggl\{\frac1{12}+\frac1{4x}\biggl[-\frac16+\int_0^\infty\frac{e^{-t} f_2(t)}{t^4(e^{2t}-1)^3} e^{-2xt}\td t\biggr]\biggr\}\\
&=\frac12\biggl\{\frac1{12}+\frac1{4x}\biggl[-\frac16-\frac1{2x} \int_0^\infty\frac{e^{-t}f_2(t)}{t^4(e^{2t}-1)^3} \frac{\td e^{-2xt}}{\td t}\td t\biggr]\biggr\}\\
&=\frac12\biggl\{\frac1{12}+\frac1{4x}\biggl[-\frac16-\frac1{2x} \biggl(-\frac{23}{180}+\int_0^\infty\frac{e^{-t}f_3(t)}{t^5(e^{2t}-1)^4} e^{-2xt}\td t\biggr)\biggr]\biggr\}.
\end{align*}
\par
By straightforward computation, we have
\begin{align*}
f_1'(t)&=e^{4 t} (4 t+9)-2 e^{3 t} (6 t^2+7 t+1)-2 e^{2 t} (2 t+5)+2 e^t (t+1)+1\\
&\to0\quad\text{as $t\to0$},\\
f_1''(t)&=2 e^t [4 e^{3 t} (2 t+5)-e^{2 t} (18 t^2+33 t+10)-4 e^t (t+3)+t+2]\\
&\triangleq2 e^tf_{11}(t)\\
&\to0\quad\text{as $t\to0$},\\
f_{11}'(t)&=4 e^{3 t} (6 t+17)-e^{2 t} (36 t^2+102 t+53)-4 e^t (t+4)+1\\
&\to0\quad\text{as $t\to0$},\\
f_{11}''(t)&=4 e^t [3 e^{2 t} (6 t+19)-e^t (18 t^2+69 t+52)-t-5]\\
&\triangleq4 e^t f_{12}(t)\\
&\to0\quad\text{as $t\to0$},\\
f_{12}'(t)&=12 e^{2 t} (3 t+11)-e^t (18 t^2+105 t+121)-1\\
&\to10\quad\text{as $t\to0$},\\
f_{12}''(t)&=e^t [12 e^t (6 t+25)-18 t^2-141 t-226]\\
&\triangleq e^tf_{13}(t)\\
&\to74\quad\text{as $t\to0$},\\
f_{13}'(t)&=3 [4 e^t (6 t+31)-12 t-47]\\
&\to231\quad\text{as $t\to0$},\\
f_{13}''(t)&=12[e^t (6 t+37)-3]\\
&\to408\quad\text{as $t\to0$},\\
f_{13}'''(t)&=12 e^t (6 t+43).
\end{align*}
As a result, since the product of finitely many absolutely monotonic functions is still absolutely monotonic, the function $f_1(t)$ is absolutely monotonic on $(0,\infty)$. This means that the function $xb(x)+\frac1{24}$ is completely monotonic on $(0,\infty)$.
\par
By direct calculation, we have
\begin{align*}
f_2'(t)&=e^{6 t} (6 t^2+26 t+40)-4 e^{5 t} (10 t^3+16 t^2+9 t+1)-6 e^{4 t} (2 t^2+9 t+14)\\
&\quad-e^{3 t} (24 t^3-40 t-8)+6 e^{2 t} (t^2+5 t+8)-4 e^t (t+1)-2 (t+2)\\
&\to0\quad \text{as $t\to0$},\\
f_2''(t)&=2 \bigl[e^{6 t} (18 t^2+84 t+133)-2 e^{5 t} (50 t^3+110 t^2+77 t+14)\\
&\quad-3 e^{4 t} (8 t^2+40 t+65)-4 e^{3 t} (9 t^3+9 t^2-15 t-8)\\
&\quad+e^{2 t} (6 t^2+36 t+63)-2 e^t (t+2)-1\bigr]\\
&\to0\quad \text{as $t\to0$},\\
f_2'''(t)&=4 e^t \bigl[9 e^{5 t} (6 t^2+30 t+49)-e^{4 t} (250 t^3+700 t^2+605 t+147)\\
&\quad-6 e^{3 t} (8 t^2+44 t+75)-6 e^{2 t} (9 t^3+18 t^2-9 t-13)\\
&\quad+e^t (6 t^2+42 t+81)-t-3\bigr]\\
&\triangleq4 e^tf_{21}(t)\\
&\to0\quad \text{as $t\to0$},\\
f_{21}'(t)&=9 e^{5 t} (30 t^2+162 t+275)-e^{4 t} (1000 t^3+3550 t^2+3820 t+1193)\\
&\quad-6 e^{3 t} (24 t^2+148 t+269)-6 e^{2 t} (18 t^3+63 t^2+18 t-35)\\
&\quad+3 e^t (2 t^2+18 t+41)-1\\
&\to0\quad \text{as $t\to0$},\\
f_{21}''(t)&=e^t \bigl[9 e^{4 t} (150 t^2+870 t+1537)-4 e^{3 t} (1000 t^3+4300 t^2+5595 t+2148)\\
&\quad-6 e^{2 t} (72 t^2+492 t+955)-12 e^t (18 t^3+90 t^2+81 t-26)\\
&\quad+6 t^2+66 t+177\bigr]\\
&\triangleq e^tf_{22}(t)\\
&\to0\quad \text{as $t\to0$},\\
f_{22}'(t)&=2 \bigl[9 e^{4 t} (300 t^2+1890 t+3509)-2 e^{3 t}(3000 t^3+15900 t^2+25385 t\\
&\quad+12039)-6 e^{2 t} (72 t^2+564 t+1201)-6 e^t (18 t^3+144 t^2+261 t+55)\\
&\quad+6 t+33\bigr]\\
&\to0\quad \text{as $t\to0$},\\
f_{22}''(t)&=4 \bigl[9 e^{4 t} (600 t^2+4080 t+7963)-e^{3 t}(9000 t^3+56700 t^2\\
&\quad+107955 t+61502)-6 e^{2 t} (72 t^2+636 t+1483)\\
&\quad-3 e^t (18 t^3+198 t^2+549 t+316)+3\bigr]\\
&\to1288\quad \text{as $t\to0$},\\
f_{22}'''(t)&=12 e^t \bigl[12 e^{3 t}(600 t^2+4380 t+8983)-e^{2 t}(9000 t^3+65700 t^2+145755 t\\
&\quad+97487)-4 e^t (72 t^2+708 t+1801)-18 t^3-252 t^2-945 t-865\bigr]\\
&\triangleq12 e^tf_{23}(t)\\
&\to26880\quad \text{as $t\to0$},\\
f_{23}'(t)&=36 e^{3 t} (600 t^2+4780 t+10443)-e^{2 t}(18000 t^3+158400 t^2+422910 t\\
&\quad+340729)-4 e^t (72 t^2+852 t+2509)-9 (6 t^2+56 t+105)\\
&\to24238\quad \text{as $t\to0$},\\
f_{23}''(t)&=4 \bigl[9 e^{3 t} (1800 t^2+15540t+36109)-e^{2t}(9000 t^3+92700 t^2+290655 t\\
&\quad+276092)-e^t (72 t^2+996 t+3361)-9 (3 t+14)\bigr]\\
&\to181608\quad \text{as $t\to0$},\\
f_{23}'''(t)&=4 \bigl[81 e^{3 t} (600 t^2+5580 t+13763)-e^{2 t} (18000 t^3+212400 t^2+766710 t\\
&\quad+842839)-e^t (72 t^2+1140 t+4357)-27\bigr]\\
&\to1070320\quad \text{as $t\to0$},\\
f_{23}^{(4)}(t)&=4 e^t \bigl[243 e^{2 t} (600 t^2+5980 t+15623)-4 e^t (9000 t^3+119700 t^2+489555 t\\
&\quad+613097)-72 t^2-1284 t-5497\bigr]\\
&\triangleq4 e^tf_{24}(t)\\
&\to5354016\quad \text{as $t\to0$},\\
f_{24}'(t)&=2 \bigl[243 e^{2 t} (600 t^2+6580 t+18613)-2e^t(9000 t^3+146700 t^2+728955 t\\
&\quad+1102652)-72 t-642\bigr]\\
&\to4634026\quad \text{as $t\to0$},\\
f_{24}''(t)&=4 \bigl[243 e^{2 t} (600 t^2+7180 t+21903)-e^t (9000 t^3+173700 t^2\\
&\quad+1022355 t+1831607)-36\bigr]\\
&\to13963144\quad \text{as $t\to0$},\\
f_{24}'''(t)&=4 e^t \bigl[486 e^t (600 t^2+7780 t+25493)-9000 t^3-200700 t^2\\
&\quad-1369755 t-2853962\bigr]\\
&\triangleq4 e^tf_{25}(t)\\
&\to38142544\quad \text{as $t\to0$},\\
f_{25}'(t)&=9 [54 e^t (600 t^2+8980 t+33273)-5 (600 t^2+8920 t+30439)]\\
&\to14800923\quad \text{as $t\to0$},\\
f_{25}''(t)&=18[27 e^t(600 t^2+10180 t+42253)-100 (30 t+223)]\\
&\to20133558\quad \text{as $t\to0$},\\
f_{25}'''(t)&=54[9 e^t(600 t^2+11380 t+52433)-1000]\\
&\to25428438\quad \text{as $t\to0$},\\
f_{25}^{(4)}(t)&=486 e^t(600 t^2+12580 t+63813).
\end{align*}
Therefore, by the fact that the product of finitely many absolutely monotonic functions is still absolutely monotonic, we see that the function $f_2(t)$ is absolutely monotonic on $(0,\infty)$. This implies that the function $\frac16-\frac{x}3-8x^2b(x)$ is completely monotonic on $(0,\infty)$.
\par
By similar arguments to proofs of the absolute monotonicity of $f_1$ and $f_2$, we may verify that the function $f_3$ is also absolutely monotonic on $(0,\infty)$. Consequently, the function $16x^3b(x)+\frac23x^2-\frac13x+\frac{23}{180}$ is completely monotonic on $(0,\infty)$.
\par
Employing the integral representation~\eqref{b(x)-int-eq} and integrating by parts gives
\begin{align*}
-(2x+1)b(x)&=\int_0^\infty\biggl(1-\frac{e^t}{2t}+\frac{1}{e^{2t}-1}\biggr) \frac{e^{-t}}t\frac{\td e^{-(2x+1)t}}{\td t}\td t\\
&=\frac1{12}-\int_0^\infty\frac{h_1(t)}{(e^{2 t}-1)^2 t^3} e^{-(2x+1)t}\td t.
\end{align*}
The verification of the absolute monotonicity of $h_1(t)$ is same as that of $f_1$ and $f_2$. Accordingly, the function $(2x+1)b(x)+\frac1{12}$ is completely monotonic on $\bigl(-\frac12,\infty\bigr)$.
\par
Using the integral representation~\eqref{b(x)-int-eq} and integrating by parts acquires
\begin{gather*}
-2(x+1)b(x)=\int_0^\infty\biggl(1-\frac{e^t}{2t}+\frac{1}{e^{2t}-1}\biggr) \frac1t\frac{\td e^{-2(x+1)t}}{\td t}\td t\\
=\frac1{12}+\frac12\int_0^\infty\frac{e^th_2(t)}{t^3(e^{2t}-1)^2}e^{-2(x+1)t}\td t\\
=\frac1{12}-\frac1{4(x+1)} \int_0^\infty \frac{e^th_2(t)}{t^3(e^{2t}-1)^2} \frac{\td e^{-2(x+1)t}}{\td t}\td t\\
=\frac1{12}-\frac1{4(x+1)} \biggl[-\frac1{6}-\int_0^\infty \frac{e^th_3(t)}{(e^{2 t}-1)^3 t^4} e^{-2(x+1)t}\td t\biggr]\\
=\frac1{12}-\frac1{4(x+1)} \biggl[-\frac1{6}+\frac1{2(x+1)}\int_0^\infty \frac{e^th_3(t)}{(e^{2 t}-1)^3 t^4} \frac{\td e^{-2(x+1)t}}{\td t}\td t\biggr]\\
=\frac1{12}-\frac1{4(x+1)} \biggl\{-\frac1{6}+\frac1{2(x+1)}\biggl[-\frac{23}{180}-\int_0^\infty \frac{e^th_4(t)}{(e^{2 t}-1)^4 t^5} e^{-2(x+1)t}\td t\biggr]\biggr\}.
\end{gather*}
By similar arguments to proofs of the absolute monotonicity of $f_1$ and $f_2$, we may verify that the functions $h_2$, $h_3$, and $h_4$ are absolutely monotonic on $(0,\infty)$. Consequently, the functions $-(x+1)b(x)-\frac1{24}$, $-(x+1)^2b(x)-\frac1{24}x-\frac1{16}$, and $-(x+1)^3b(x)-\frac{x^2}{24}-\frac{5 x}{48}-\frac{203}{2880}$ are completely monotonic on $\bigl(-\frac12,\infty\bigr)$. The proof of Theorem~\ref{Q(x)-CM-thm-1} is complete.
\end{proof}

\begin{proof}[Proof of Theorem~\ref{Q(x)-LCM-thm-2}]
This follows from reformulation of the functions involving the remainder $b(x)$ in Theorem~\ref{Q(x)-CM-thm-1}.
\end{proof}

\section{Remarks}

For better understanding our main results, we list several remarks as follows.

\begin{rem}
In~\cite{mpf-1993, Schilling-Song-Vondracek-2nd, widder}, we may find the classical Bernstein-Widder theorem which reads that a function $f$ is completely monotonic on $(0,\infty)$ if and only if it is a Laplace transform of some nonnegative measure $\mu$, that is,
\begin{equation} \label{berstein-1}
f(x)=\int_0^\infty e^{-xt}\td\mu(t),
\end{equation}
where $\mu(t)$ is non-decreasing and the integral converges for $0<x<\infty$.
In~\cite{CBerg, absolute-mon-simp.tex, compmon2, minus-one, Schilling-Song-Vondracek-2nd}, we may search out that any logarithmically completely monotonic function must be a completely monotonic function, but not conversely. To some extent, these reveal the significance and meanings of our main results.
\end{rem}

\begin{rem}
By the monotonicity in Theorem~\ref{Q(x)-LCM-thm-2}, we may derive some double inequalities for bounding the function
\begin{equation}
\frac{\Gamma(x+1)}{\sqrt{2\pi}\,} \biggl(\frac{e}{x+1/2}\biggr)^{x+1/2}
\end{equation}
or its reciprocal on the intervals $(0,\infty)$ and $\bigl(-\frac12,\infty\bigr)$. These inequalities would be better than~\eqref{p.236-Theorem-1-rew} and~\eqref{p-1775-Corollary-2.4}.
\end{rem}

\begin{rem}
From~\eqref{y} and~\eqref{w}, it follows that
\begin{equation*}
\sqrt{x}\,\Bigl(\frac{x}e\Bigr)^{x}e^{\vartheta(x)/12x}
=\biggl(\frac{x+1/2}{e}\biggr)^{x+1/2}e^{w(x) /12x}
\end{equation*}
which may be rewritten as
\begin{equation}\label{s}
\vartheta(x) =w(x)-6x+12x\biggl(x+\frac12\biggr)\ln\biggl(1+\frac1{2x}\biggr)
\end{equation}
and
\begin{equation}\label{s-rev}
\theta(x) =b(x)+\biggl(x+\frac12\biggr)\ln\biggl(1+\frac1{2x}\biggr)-\frac12.
\end{equation}
Combining these two identities with Theorem~\ref{Q(x)-CM-thm-1}, we may deduce the complete monotonicity of some functions related to the remainder $\theta(x)$ and the variant remainder $\vartheta(x)$ of Binet's formula. Furthermore, we may deduce some double inequalities for bounding the function
\begin{equation}
\frac{e^x\Gamma(x+1)}{x^x\sqrt{2\pi x}\,}
\end{equation}
or its reciprocal on the interval $(0,\infty)$.
\end{rem}

\begin{rem}
In~\cite[Theorem~1.2]{Lu-JNT-2014}, it was given that the inequality
\begin{equation}\label{Lu-JNT-2014-th2-ineq}
\Gamma(x+1)<\sqrt{2\pi}\,\biggl(\frac{x+1/2}e\biggr)^{x+1/2} \biggr[1-\frac{k}{24x}+\biggl(\frac{k^2}{1152}+\frac{k}{48}\biggr)\frac1{x^2}\biggr]^{1/k}
\end{equation}
is valid for $x\ge m_1$ and for any positive integer $k$, where $m_1\ge0$ is a constant depending on $k$. To compare the right hand side inequality in~\eqref{p-1775-Corollary-2.4} with~~\eqref{Lu-JNT-2014-th2-ineq}, it suffices to discuss the inequality
\begin{equation}
\frac{e^{7/12}}{\sqrt{\pi}\,}\le e^{1/24(x+1/2)} \biggr[1-\frac{k}{24x} +\biggl(\frac{k^2}{1152}+\frac{k}{48}\biggr)\frac1{x^2}\biggr]^{1/k}.
\end{equation}
If letting $x\to\infty$, the above inequality becomes $e^{7/12}=1.79\dotsc\le\sqrt{\pi}\,=1.77\dotsc$. This contradiction implies that, when $x$ is sufficiently large, the inequality~\eqref{Lu-JNT-2014-th2-ineq} is better than the right hand side inequality in~\eqref{p-1775-Corollary-2.4}.
\par
In~\cite[Theorem~1.2]{Lu-Wang-JMAA-2013}, it was also deduced that the inequality
\begin{equation}\label{Lu-Wang-JMAA-2013-th1.2-ineq}
\Gamma(x+1)<\sqrt{2\pi x}\,\Bigl(\frac{x}e\Bigr)^x \biggl(1+\frac{k}{12x}+\frac{k^2}{288x^2}\biggr)^{1/k}
\end{equation}
holds for $x\ge m_1$ and $1\le k\le5$ and reverses for $x\ge m_2$ and $k\ge6$, where $m_1$ and $m_2$ are constants depending on $k$.
\par
Because the inequalities~\eqref{Lu-JNT-2014-th2-ineq} and~\eqref{Lu-Wang-JMAA-2013-th1.2-ineq} are derived from an asymptotic approximation of the gamma function $\Gamma(x+1)$, they may be more accurate when $x$ is sufficiently large, but not, even invalid, when $x$ is close to zero.
\end{rem}

\begin{rem}
Now we consider the functions
\begin{equation}
F(x) =1+4x-8x\biggl(x+\frac12\biggr) \ln \biggl(1+\frac1{2x}\biggr)
\end{equation}
and
\begin{equation}
G(x)=\biggl(x+\frac12\biggr)\ln\biggl(1+\frac1{2x}\biggr)-\frac12
\end{equation}
appeared in~\eqref{s} and~\eqref{s-rev}. It is obvious that $F(x)=1-8xG(x)$.
\par
We claim that the functions $F(x)$ and $G(x)$ are completely monotonic on $(0,\infty)$. This may be verified as follows.
A direct computation gives
\begin{align*}
F'(x)&=4\biggl[2-(4 x+1) \ln \biggl(1+\frac{1}{2 x}\biggr)\biggr],\\
F''(x)&=\frac{4(4 x+1)}{x (2 x+1)}-16\ln \biggl(1+\frac{1}{2 x}\biggr),\\
F'''(x)&=-\frac4{x^2 (2 x+1)^2}.
\end{align*}
It is clear that $\lim_{x\to\infty}F''(x)=0$. It is not difficult to see that
\begin{equation*}
F'(x)=4\biggl[2-2\ln \biggl(1+\frac{1}{2 x}\biggr)^{2x}-\ln \biggl(1+\frac{1}{2 x}\biggr)\biggr]
\to4(2-2\ln e-0)=0
\end{equation*}
and
\begin{align*}
F(x) &=1-2\ln \biggl(1+\frac1{2x}\biggr)^{2x}+8x^2\biggl[\frac1{2x}-\ln \biggl(1+\frac1{2x}\biggr)\biggr]\\
&\to1-2\ln e+2\lim_{u\to0^+}\frac{u-\ln(1+u)}{u^2}\\
&=0
\end{align*}
as $x\to\infty$, where $u=\frac1{2x}$. Hence, by virtue of $F'''(x)>0$ on $(0,\infty)$, we may conclude $F(x)>0$, $F'(x)<0$, and $F''(x)>0$. Furthermore, by the facts that the functions $\frac1x$ and $\frac1{1+2x}$ are completely monotonic on $(0,\infty)$ and that the product of finitely many completely monotonic functions is also a completely monotonic function, we may see that the function $-F'''(x)$ is completely monotonic on $(0,\infty)$. In a word, the function $F(x)$ is completely monotonic on $(0,\infty)$.
\par
The complete monotonicity of $G(x)$ may also be verified straightforwardly.
\end{rem}

\subsection*{Acknowledgements}
The author thanks the anonymous referee for his/her careful corrections to and valuable comments on the original version of this paper.
\par
This work was partially supported by the NNSF under Grant No.~11361038 of China and the Foundation of the Research Program of Science and Technology at Universities of Inner Mongolia Autonomous Region under Grant number No.~NJZY14191 and No.~NJZY14192, China.

\end{document}